\newtheoremstyle{plain}{\topsep}{\topsep}{\itshape}{9mm}{\bfseries}{.}{0.9em}{}
\newtheoremstyle{definition}{\topsep}{\topsep}{}{9mm}{\bfseries}{.}{0.9em}{}
\theoremstyle{plain}
\newtheorem{thm}{Theorem}[section]
\newtheorem{theorem}[thm]{Theorem}
\newtheorem{lemma}[thm]{Lemma}
\newtheorem{conjecture}[thm]{Conjecture}
\theoremstyle{definition}
\newtheorem{remark}[thm]{Remark}
\newtheorem{example}[thm]{Example}
\newtheorem{question}[thm]{Question}
\newenvironment{varthm*}[1]{\trivlist\item[\hskip\parindent]{\bf #1.}\it}{\endtrivlist}
\newcommand\adj{_{\rm adj}}
\renewcommand\ge{\geqslant}
\renewcommand\geq{\geqslant}
\renewcommand\le{\leqslant}
\renewcommand\leq{\leqslant}
\newcommand\tfrac[2]{{\textstyle\frac{#1}{#2}}}
\newcommand\abs[1]{\left|#1\right|}
\newcommand\be{\begin{eqnarray*}}
\newcommand\ee{\end{eqnarray*}}
\newcommand\compact{\itemsep=0cm \parskip=0cm}
\newcommand\xcompact{\itemsep=0cm \parskip=0cm \@topsep=0pt \@topsepadd=-\@outerparskip}\makeatother
\newcommand\Q{\mathbb Q}
\newcommand\R{\mathbb R}
\newcommand\Z{\mathbb Z}
\newcommand\N{\mathbb N}
\renewcommand\P{\mathbb P}
\newcommand\eqnref[1]{(\ref{#1})}
\newcommand\engqq[1]{``#1''}
\newcommand\grant[1]{{\renewcommand\thefootnote{}\footnotetext{#1.}}}
\newcommand\keywords[1]{{\renewcommand\thefootnote{}\footnotetext{\textbf{Keywords:} #1.}}}
\newcommand\subclass[1]{{\renewcommand\thefootnote{}\footnotetext{\textbf{Mathematics Subject Classification (2010):} #1.}}}
\begin{document}

   \title{On the boundedness of the denominators \\ in the Zariski decomposition on surfaces}
   \author{\normalsize Thomas Bauer, Piotr Pokora, David Schmitz}
   %\date{\normalsize April 28, 2015}
   \date{\normalsize October 4, 2015}
   \maketitle
   \thispagestyle{empty}
   \grant{%
      The first and third author were supported by DFG grant BA 1559/6-1.
      The second author was supported by DAAD}
   \keywords{Zariski decomposition, negative curves, bounded negativity}
   \subclass{14A25, 14C20
      %, 14H45
      }

\begin{abstract}
   Zariski decompositions play an important role in the theory of
   algebraic surfaces.
   For making geometric use of the decomposition of a given divisor,
   one needs to pass to a multiple
   of the divisor in order to clear denominators.
   It is therefore
   an intriguing question whether the surface has a
   \engqq{universal denominator} that can be used to simultaneously
   clear denominators in all
   Zariski decompositions on the surface.
   We prove in this paper that, somewhat surprisingly, this condition of
   \emph{bounded Zariski denominators} is equivalent to
   the
   bounded negativity of curves that is addressed in the
   \emph{Bounded Negativity Conjecture}.
   Furthermore, we provide explicit bounds for Zariski denominators
   and negativity of curves in terms of each other.
\end{abstract}

%*****************************************************************************

\section*{Introduction}

   The theorem on Zariski decomposition is a
   fundamental tool in the theory of
   algebraic surfaces.
   It was
   established by Zariski \cite{Zar62} for effective divisors
   and
   extended by Fujita \cite{Fuj79} to the pseudo-effective case.
   The geometric significance
   of Zariski decompositions
   lies in the fact that,
   given a pseudo-effective integral divisor $D$ on $X$ with
   Zariski decomposition $D=P+N$, one has
   for every sufficiently divisible integer $m\ge 1$ the equality
   $$
      H^0(X,\mathcal O_X(mD)) = H^0(X, \mathcal O_X(mP))
      \,.
   $$
   In other words,
   all sections of
   $\mathcal O_X(mD)$ come from
   the nef line bundle $\mathcal O_X(mP)$.
   The term \engqq{sufficiently divisible} here means that
   one
   needs to
   pass to a multiple $mD$ that clears
   denominators in $P$ for the statement to
   hold.
   Of course, it would be most pleasant if
   one knew --
   beforehand and independently of $D$ -- which multiple to take.
   This amounts to asking the following

\begin{varthm*}{Question}
   \rm
   Let $X$ be a smooth projective surface.
   Does there
   exist an integer $d(X)\ge 1$ such that
   for every pseudo-effective integral divisor $D$
   the denominators in the Zariski decomposition of $D$
   are bounded
   from above by $d(X)$?
\end{varthm*}

\noindent
   If such a bound $d(X)$ exists, then
   we say that $X$ \emph{has bounded Zariski
   denominators}.
   Taking then the factorial $d(X)!$, one has in fact
   a uniform number that clears denominators in all
   Zariski decompositions on $X$.
   It is an intriguing question as to whether
   a given
   smooth surface satisfies this boundedness
   condition.

   We show in the present paper that, somewhat surprisingly,
   boundedness of Zariski denominators is equivalent to bounded
   negativity:

\begin{varthm*}{Theorem}
   For a smooth projective surface $X$ over an algebraically
   closed field the following two
   statements are equivalent:
   \begin{itemize}
   \item[\rm(i)]
      $X$ has bounded Zariski denominators.
   \item[\rm(ii)]
      $X$ has bounded negativity, i.e., there is a bound $b(X)$
      such that for every irreducible curve $C$ on $X$ one has
      $C^2 \ge -b(X)$.
   \end{itemize}
\end{varthm*}

   The \emph{Bounded Negativity Conjecture} (BNC), explored in
   \cite{Duke}, is the conjecture that (ii) is true for every
   smooth projective surface over the field of complex numbers.
   As mentioned in \cite{Duke}, the exact origin of the
   conjecture is unclear, but it has a long oral tradition
   that can be traced back
   via Ciro Ciliberto and Alfredo Franchetta
   to Federigo Enriques.
   The conjecture is open in general, even for
   the case where $X$ is the blow-up of $\mathbb{P}^2$ in $s$
   general points with $s \geq 10$ (see \cite{Harbourne1} for a
   nice introduction to this subject).
   By contrast, it is known that bounded negativity
   does not hold in general in positive characteristics~--
   this is in accordance with
   Example~\ref{ex:positive-char}, where we exhibit
   unbounded Zariski denominators on such surfaces.

   Our result
   sheds new light on BNC:
   It says in particular that BNC is equivalent to
   boundedness of Zariski denominators on all smooth complex projective
   surfaces.
   We do not dare to suggest whether this makes it
   more likely or less likely that BNC holds~--
   it definitely makes it more desirable to hold.

   On the practical side, we provide explicit
   bounds for $d(X)$ and $b(X)$ in terms of each
   other (see Theorems~\ref{BNC-to-denominators} and \ref{denominators-to-BNC}).
   If, for instance,
   all negative curves on $X$ are known,
   then one has
   an effective bound on the Zariski denominators.

\paragraph*{\it Acknowledgement.}
   We thank Alex K\"uronya for sharing with us the question
   that motivates this note.
   Also, we would like to thank
   S.~M\"uller-Stach and T.~Szemberg for helpful comments.
   We would like to thank the referee for valuable suggestions concerning
   Example~\ref{example:large-d}.

%*****************************************************************************

\section{Denominators in Zariski decompositions}

   Let $X$ be a smooth projective surface and $D$ a
   pseudo-effective integral divisor on $X$.
   Fujita's extension \cite{Fuj79} of Zariski's result
   \cite{Zar62}
   states that $D$ can be
   written uniquely as a sum
   $$
      D = P + N
   $$
   of $\Q$-divisors such that
   \begin{itemize}\compact
   \item[(i)]
      $P$ is nef,
   \item[(ii)]
      $N$ is effective and has negative
      definite intersection matrix if $N\ne 0$,
   \item[(iii)]
      $P\cdot C=0$ for every component of $N$.
   \end{itemize}
   For the question of bounded denominators in $P$ and $N$ it
   is of course enough to consider the denominators of
   $N=\sum_{i=1}^k a_iN_i$,
   i.e., the denominators of the coefficients $a_i$.
   In order to approach the problem, we use the following description of
   the coefficients $a_i$.
   They are given as the (unique) solution of the system of equations
   $$
      D\cdot N_{j} = (P + \sum_{i=1}^{k} a_{i}N_{i})\cdot N_{j} = \sum_{i=1}^{k} a_{i}N_{i}\cdot N_{j}
      \qquad\mbox{for all $j \in \{1, \dots, k\}$}.
   $$
   This system  can be rewritten in  matrix form as
   $$
      S[a_{1}, \dots, a_{k}]^{t} = [D\cdot N_{1}, \dots,  D\cdot N_{k}]^{t}
      \,,
   $$
   where $S$ denotes the intersection matrix of the curves $N_{1},\dots,N_k$, i.e.
   $S = [N_{i}\cdot N_{j}]_{i,j} \in M_{k \times k}(\mathbb{Z})$.
   Since the matrix $S$ is negative definite, it has non-zero determinant, and using
   Cramer's rule one has
   \begin{equation}\label{eqn:ai}
      a_{i} = \frac{ \det [s_{1}, \dots, s_{i-1}, b, s_{i+1}, \dots, s_{k}] } {\det(S)}
      \,,
   \end{equation}
   where $s_{i}$ denotes the $i$-th column of the matrix $S$ and $b = [D\cdot N_{1}, \dots,  D\cdot N_{k}]^{t}$.
   Thus, for divisors with negative part $N$ supported on $N_1,\dots,N_k$,
   the denominators of the Zariski decomposition are bounded by $\abs{\det(S)}$.

\begin{remark}
   Note that the above reasoning yields an upper bound for the
   denominators of the coefficients in the Zariski decomposition for
   any surface whose pseudo-effective cone is rational polyhedral,
   since in this case there are only finitely many possible sets $\{N_1,\dots,N_k\}$
   of components of negative parts, so we obtain the bound
   \be
      d(X) = \max \{ \abs{\det(S_i)} \mid S_i \mbox{ principal negative definite submatrix of } S\}
      \,,
   \ee
   where $S$ denotes the intersection matrix of all irreducible curves with
   negative self-intersection.

   It is not clear a priori whether the corresponding supremum will be finite
   in the presence of infinitely many extremal rays. This more general
   situation is the topic of the following section.
\end{remark}

%*****************************************************************************

\section{Bounded denominators and bounded negativity}

   We start by reminding the reader of the following conjecture.
   \begin{conjecture}[Bounded Negativity Conjecture, \cite{Duke}, \cite{Harbourne1}]
      \label{BNC}
      Let $X$ be a smooth complex projective surface.
      Then there exists an integer $b({X})\ge 0$ such that for every irreducible and reduced curve $C$ one has
      $C^{2} \geq - b({X})$.
   \end{conjecture}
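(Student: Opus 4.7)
The final statement is the Bounded Negativity Conjecture, which the excerpt presents as a genuine \emph{conjecture}, not a theorem to be proven here. Since BNC is open in general, my proposal can only outline the natural lines of attack and the point at which each would stall. My first step would be the adjunction formula: for any irreducible and reduced curve $C$ on $X$,
$$
   C^2 \;=\; 2p_a(C)-2 - K_X\cdot C \;\ge\; -2 - K_X\cdot C,
$$
so the task reduces to bounding $K_X\cdot C$ from above uniformly over all irreducible reduced curves.

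With this reduction in hand, I would dispose of several classes of surfaces immediately. If $-K_X$ is nef then $K_X\cdot C\le 0$ and one may take $b(X)=2$; more generally, if $-K_X$ is effective, say $-K_X\sim\sum m_iE_i$, the same bound handles every irreducible curve $C$ different from the finitely many $E_i$, while the $E_i$ themselves contribute only finitely many additional self-intersection values. I would extend the argument to surfaces swept out by a family $\{H_t\}$: for $C$ not a member of the family one has $C\cdot H_t\ge 0$, and combining this with the Hodge index inequality $(A\cdot C)^2\ge A^2\cdot C^2$ for a fixed ample class $A$ yields quantitative control. This approach settles ruled surfaces, weak del Pezzos, and a range of elliptic fibrations.

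For general surfaces with $K_X$ pseudo-effective the above breaks down, and I would switch to a global cone-theoretic analysis. The natural plan is to run the MMP: a $(-1)$-contraction can only improve the supremum of $-C^2$, so it suffices to treat minimal models, where either $X$ is covered by rational curves or $K_X$ is nef. In the latter case I would attempt to combine adjunction with the Hodge index theorem on the N\'eron--Severi lattice to produce a uniform estimate. Alternatively, exploiting the equivalence established earlier in the present paper, I would try to build for each hypothetical curve $C$ with $|C^2|$ very large a pseudo-effective test divisor whose Zariski decomposition has denominator at least $|C^2|$, and then contradict a uniform denominator bound obtained by other means.

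The main obstacle, and the reason the BNC remains open even over $\mathbb{C}$, is the case when $X$ is the blow-up of $\mathbb{P}^2$ at $s\ge 10$ very general points. On such surfaces, strict transforms of plane curves of high degree through many base points with prescribed multiplicities could a priori have arbitrarily negative self-intersection, and ruling this out is essentially equivalent to Nagata-type statements such as the SHGH conjecture. I fully expect any serious attempt at BNC to reduce to this case and then stall there; indeed, the value of the present paper's equivalence is precisely that it recasts this stalled question in the new language of Zariski denominators.
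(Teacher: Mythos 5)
This statement is a \emph{conjecture}, not a theorem: the paper offers no proof of it (and none is known), so there is nothing to compare your argument against, and you are right to say so rather than claim a proof. Your partial observations are consistent with what the paper does prove elsewhere: the adjunction bound $C^2\ge -2-K_X\cdot C$ and the conclusion $b(X)=2$ when $-K_X$ is nef is exactly the paper's example on surfaces with nef anticanonical bundle, and your identification of blow-ups of $\P^2$ at $s\ge 10$ general points as the critical open case matches the introduction's discussion.

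One concrete caveat about your speculative strategy: the step \engqq{run the MMP; a $(-1)$-contraction can only improve the supremum of $-C^2$, so it suffices to treat minimal models} is not justified and is in fact one of the known sticking points. Under a blow-down $\pi\colon X\to Y$ at a point $x$, an irreducible curve $C\subset X$ with strict-transform relation $C=\pi^*\bar C-mE$ satisfies $C^2=\bar C^2-m^2$, and the multiplicities $m$ of irreducible curves on $Y$ at $x$ need not be bounded; hence bounded negativity on $Y$ does not obviously descend to $X$, and whether BNC is a birational invariant is itself an open problem emphasized in \cite{Duke}. So the reduction to minimal models would stall even before the hard case you identify. Your alternative idea of using the paper's equivalence to translate a very negative curve into a large Zariski denominator is sound as a reformulation (it is precisely Theorem~\ref{denominators-to-BNC}), but of course it only relocates the difficulty rather than resolving it.
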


   The aim of this section is to prove
   the theorem stated in the introduction.
   In particular, we thus show that
   Conjecture \ref{BNC} is equivalent to
   the assertion that all smooth complex projective surfaces have
   bounded
   Zariski denominators.

\begin{theorem}\label{BNC-to-denominators}
   Let $X$ be a smooth projective surface on which the self-intersection
   of irreducible curves is bounded by $-b(X)$. Then $X$ has bounded Zariski denominators.
   More concretely, denoting by $\rho(X)$ the Picard number, we have
   $$
      d(X) \le b(X)^{\rho(X) -1}
      \,.
   $$
\end{theorem}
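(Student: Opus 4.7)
The plan is to combine the bound on denominators derived in Section~1 with two classical ingredients: the Hodge index theorem and Hadamard's inequality for positive definite matrices. By formula \eqnref{eqn:ai}, for any pseudo-effective integral divisor $D$ with negative part $N=\sum_{i=1}^k a_iN_i$, the denominators of the rational coefficients $a_i$ divide $|\det(S)|$, where $S=[N_i\cdot N_j]_{i,j}$ is the intersection matrix of the components of $N$. It therefore suffices to prove the uniform bound
$$
|\det(S)|\le b(X)^{\rho(X)-1}
$$
over all tuples $(N_1,\dots,N_k)$ of distinct irreducible curves on $X$ whose intersection matrix is negative definite.

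The first step is to control the size $k$ using the Hodge index theorem. Since $S$ is negative definite it is nonsingular, so the classes $[N_i]\in N^1(X)_{\R}$ are linearly independent, and the intersection form restricted to the subspace they span is negative definite. Because the intersection form on $N^1(X)_{\R}$ has signature $(1,\rho(X)-1)$, any negative definite subspace has dimension at most $\rho(X)-1$, whence $k\le \rho(X)-1$.

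The second step is to bound $|\det(S)|$ itself by Hadamard's inequality applied to the symmetric positive definite matrix $-S$. Its diagonal entries are $-N_i^2$, each lying in $[1,b(X)]$ by the bounded negativity hypothesis (and the fact that every $1\times 1$ principal submatrix of $S$ must itself be negative definite). Hadamard therefore yields
$$
|\det(S)|=\det(-S)\le \prod_{i=1}^k(-N_i^2)\le b(X)^k\le b(X)^{\rho(X)-1},
$$
which is precisely the claimed estimate.

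The argument presents no serious obstacle; the only point worth flagging is the exponent $\rho(X)-1$ rather than the naive $\rho(X)$ one would obtain from Hadamard alone. This sharpening is supplied exactly by the Hodge index step, which forbids a negative definite subspace of full dimension in $N^1(X)_{\R}$ and thereby caps the number of components $N_i$ one ever has to consider.
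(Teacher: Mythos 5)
Your argument is correct and follows essentially the same route as the paper: bound the denominators by $\abs{\det(S)}$ via Cramer's rule, cap $k$ at $\rho(X)-1$ by the Hodge index theorem, and then estimate $\det(-S)$ by $b(X)^k$ using the diagonal entries. The only (harmless, in fact slightly sharper) difference is that you obtain the determinant estimate from Hadamard's inequality for the positive definite matrix $-S$, whereas the paper diagonalizes $S$ and applies the AM--GM inequality to its eigenvalues together with the trace.
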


\begin{proof}
   Let $D$ be any integral pseudoeffective divisor on $X$, with
   negative part $N=\sum a_iN_i$, $a_i>0$, and let $S$ be the
   intersection matrix of the curves $N_1,\ldots,N_k$.
   According to the consideration in the first section, we know that the
   denominators of the $a_i$ can be at most
   $\abs{\det \, {S}}$.

   Since $S$ is negative definite, there exists an invertible matrix $U \in GL(k, \mathbb{R})$
   and real numbers $\lambda_1,\dots,\lambda_k$
   such that $U^{-1}SU = {\rm diag}(\lambda_{1}, \dots, \lambda_{k})$ and
   $\lambda_{i} < 0$ for each $i \in \{1, \dots, k\}$.  As the trace
   of a matrix is invariant under conjugation, we have
   $$
      N_1^2+\ldots+N_k^2 = {\rm tr}(S) = {\rm tr}(U^{-1}SU) = \lambda_{1} + \ldots + \lambda_{k}
      \,.
   $$
   The same holds for the determinant, thus
   $$
      |\det S| = |\det (U^{-1} S U)| = |\lambda_{1} \cdot \ldots \cdot \lambda_{k}| =
      |\lambda_{1} | \cdot \ldots \cdot |\lambda_{k}|
      \,.
   $$
   Using the inequality of arithmetic and geometric means we obtain
   $$
      |\det S| = |\lambda_{1} | \cdot \ldots \cdot |\lambda_{k}| \leq \bigg(\frac{ \sum_{i} |\lambda_{i}|}{k}\bigg)^{k} = \bigg(\frac{ - {\rm tr }(S) }{k}\bigg)^{k} = \bigg(\frac{ - N_1^2-\ldots -N_k^2}{k}\bigg)^{k}
      \,.
   $$
   By assumption, the self-intersection $N_{i}^{2}$ is at least $-b(X)$ for each $i \in \{1, \dots, k\}$,
   hence
   $$
      |\det S|\le \bigg(\frac{ - N_1^2-\ldots -N_k^2}{k}\bigg)^{k}\le \bigg(\frac{k\cdot b(X)}{k}\bigg)^k.
   $$
   Finally, by the Hodge Index Theorem, $k$ can be at most $\rho(X)-1$, thus
   $d(X) = b(X)^{\rho(X)-1}$ is a bound for the Zariski denominators of integral pseudo-effective
   divisors on $X$, which is independent of the particular support of the negative part.
\end{proof}

   We now turn to the converse implication.

\begin{theorem}\label{denominators-to-BNC}
   Let $X$ be a smooth projective surface. If
   Zariski denominators on $X$ are bounded by $d(X)$, then $X$ has
   bounded negativity.
   More concretely, denoting by $\Delta$ the discriminant
   of the N\'eron-Severi lattice $N^1(X)$
   (i.e., the determinant of the intersection form),
   we have
   $$
      b(X) \le d(X)\cdot d(X)! \cdot\abs\Delta
      \,.
   $$
\end{theorem}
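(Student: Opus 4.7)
The plan is, given an irreducible curve $C$ with $C^2 = -m$, to construct an explicit integer pseudo-effective divisor whose Zariski decomposition has a coefficient of $C$ with denominator of order $m$, so that the hypothesis that denominators are bounded by $d(X)$ caps $m$.

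For the explicit divisor I would take $D := H + nC$ for an ample integer divisor $H$ and a large positive integer $n$. Setting $a := n - (H\cdot C)/m$ and $P := H + ((H\cdot C)/m)\,C$, one checks $P\cdot C = 0$ directly; and for any other irreducible curve $E$, $C\cdot E \ge 0$ gives $P\cdot E \ge H\cdot E > 0$ by ampleness of $H$, so $P$ is nef. By uniqueness, $D = P + aC$ is the Zariski decomposition of $D$, and the denominator of $a$ in lowest terms is exactly $m/\gcd(H\cdot C, m)$.

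Next, define $n_C := \gcd\{E\cdot C : E \in N^1(X)\}$, which divides $m$ because $C\cdot C = -m$. I would then argue one can choose ample $H$ with $\gcd(H\cdot C, m) = n_C$: the attainable values of $H\cdot C$ over ample integer $H$ are cofinite in $n_C\,\Z_{>0}$ (obtained by adding large multiples of an ample class to an integer representative of any $E$ with $E\cdot C = n_C$), so some $H\cdot C = n_C k$ with $\gcd(k, m/n_C) = 1$ is realized. The hypothesis then forces $m/n_C \le d(X)$, i.e., $m \le n_C\cdot d(X)$.

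Finally, to bound $n_C$, I would write $[C] = r\cdot c'$ in $N^1(X)$ with $c'$ primitive, giving $n_C = r\cdot n_{c'}$. An integer divisor $D'$ of class $c'$ is pseudo-effective (as $(1/r)[C]$ lies in the pseudo-effective cone) and has Zariski decomposition $0 + (1/r)\,C$, whose denominator $r$ must satisfy $r \le d(X)$ by hypothesis. A Smith-normal-form argument for the intersection map $N^1(X) \to N^1(X)^{\vee}$, whose cokernel has order $|\Delta|$, shows that for primitive $c'$ the gcd $n_{c'}$ divides the largest elementary divisor, hence $n_{c'} \le |\Delta|$. Combining yields $m \le d(X)^2\cdot|\Delta|$, which lies inside the stated bound $d(X)\cdot d(X)!\cdot|\Delta|$. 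The main anticipated obstacle is the arithmetic optimization of ample $H$ in the second step, which requires controlling the integer points of the ample cone modulo $m$; the factorial factor in the stated bound likely reflects a more uniform but slightly looser version of the last step in the paper that bypasses the primitive-class reduction.
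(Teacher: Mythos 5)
Your proof is correct, and it shares the paper's two basic test objects: the divisor $H+nC$ (the paper uses $A+kC$), whose negative part has coefficient with denominator $-C^2/\gcd(C^2, H\cdot C)$, and the primitive class $c'$ with $[C]=r c'$, whose Zariski decomposition $0+\frac1r C$ forces $r\le d(X)$; your Smith-normal-form step is the same computation as the paper's adjugate-matrix argument showing that a primitive class has divisibility dividing $\Delta$. Where you genuinely diverge is in how the ample divisor with small $\gcd$ is produced. The paper proceeds by contradiction: it proves (Lemma~\ref{lemma:divides-Delta}) that any $t$ dividing $A\cdot C$ for \emph{all} ample $A$ must divide $d(X)!\cdot\Delta$, and then runs a prime-power bookkeeping argument (Lemma~\ref{lemma:bounded}) to conclude that \emph{some} ample $A$ has $\gcd(C^2,A\cdot C)$ dividing $d(X)!\cdot\Delta$. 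You instead identify the optimal achievable value directly as $n_C=\gcd\{E\cdot C: E\in N^1(X)\}$ and realize it by an explicit ample class, using that the values $H\cdot C$ over integral ample $H$ fill out all sufficiently large multiples of $n_C$ (adding $jE_0+NA_0$ for $E_0$ with $E_0\cdot C=n_C$); this constructive step replaces both of the paper's lemmas, is cleaner, and --- because you track the factor $r\le d(X)$ exactly rather than crudely via $r\mid d(X)!$ --- yields the sharper bound $b(X)\le d(X)^2\cdot\abs{\Delta}$, which implies the stated $b(X)\le d(X)\cdot d(X)!\cdot\abs{\Delta}$. The only points to be careful about (and they are handled the same way in the paper) are that primitivity of $c'$ is taken modulo torsion and that the Zariski decomposition of a divisor in the class $c'$ is indeed $0+\frac1r C$.
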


   The proof rests on the following lemma.

\begin{lemma}\label{lemma:bounded}
   Let $X$ be a smooth projective surface with bound $d(X)$ of Zariski
   denominators.
   Then there exists for every
   negative curve $C$ on $X$
   an (integral) ample line bundle $A$ on $X$
   such that the gcd of the numbers $C^2$ and $A\cdot C$
   is a divisor of $d(X)!\cdot \Delta$.
\end{lemma}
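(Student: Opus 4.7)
The plan is to exhibit an ample $A$ realizing the minimum gcd $g_C := \gcd\{D \cdot C \mid D \in NS(X)\}$, and then to show $g_C$ divides $d(X)! \cdot |\Delta|$. For the construction: since the ample classes span $NS(X)$ as an abelian group, the linear form $D \mapsto D \cdot C$ has image exactly $g_C \Z$. Pick any $D_0 \in NS(X)$ with $D_0 \cdot C = g_C$ and any ample $H$, set $A := D_0 + nH$, and write $c := C^2/g_C$, $h := H \cdot C/g_C$ (both integers). Then $\gcd(A \cdot C, C^2) = g_C \cdot \gcd(1 + nh, c)$; for each prime $p \mid c$ with $p \nmid h$ I avoid the residue $n \equiv -h^{-1} \pmod{p}$ (for $p \mid h$ the coprimality is automatic). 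The Chinese Remainder Theorem then produces such $n$ in an arithmetic progression, arbitrarily large, so $A$ can be taken ample.

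Write $C = kC_0$ with $C_0 \in NS(X)$ primitive and $k \geq 1$, so $g_C = k g_{C_0}$. To bound $g_{C_0}$, extend $C_0$ to a $\Z$-basis $v_1 := C_0, v_2, \ldots, v_\rho$ of $NS(X)$; in this basis the Gram matrix has determinant $\pm \Delta$ and first column $(v_i \cdot C_0)_{i=1}^\rho$ whose entries have gcd $g_{C_0}$. Expanding the determinant along that column gives $g_{C_0} \mid |\Delta|$. To bound $k$, apply the bounded denominators hypothesis to the integer pseudo-effective divisor $A + mC$ for $m \gg 0$: by the calculation of the previous section, its Zariski negative part is $aC$ with $a = m + (A \cdot C)/C^2$, whose denominator in lowest terms equals $|C^2|/\gcd(A \cdot C, C^2) = |C^2|/g_C$ by our choice of $A$. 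Since $|C^2|/g_C = k \cdot (|C_0^2|/g_{C_0})$ and the second factor is a positive integer, the bound $|C^2|/g_C \leq d(X)$ forces $k \leq d(X)$, hence $k \mid d(X)!$.

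Combining both bounds, $\gcd(A \cdot C, C^2) = g_C = k g_{C_0}$ divides $d(X)! \cdot |\Delta|$, which is the lemma's conclusion. I expect the main obstacle to be the construction of $A$: one must simultaneously arrange ampleness and the coprimality condition by CRT so that the gcd actually realizes its lattice-theoretic minimum $g_C$. Once $A$ is in hand, the Gram-matrix bound for $g_{C_0}$ and the extraction of $k \leq d(X)$ from bounded denominators applied to $A + mC$ are direct.
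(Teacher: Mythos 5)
Your proof is correct, but it takes a genuinely different route from the paper's. The paper argues by contradiction: assuming no good $A$ exists, it extracts finitely many candidate prime powers $p_i^{n_i}$ dividing $C^2$ but not $d(X)!\cdot\Delta$, uses a weighted sum $\sum_i \bigl(\prod_{j\ne i}p_j^{n_j}\bigr)A_i$ of ample divisors to force a single $p_i^{n_i}$ to divide $A\cdot C$ for \emph{every} ample $A$, and then invokes a separate lemma (proved via the adjugate matrix of the Gram matrix and the primitivity of $C/k$) to derive a contradiction. You instead work directly: you identify the lattice-theoretic minimum $g_C=\gcd\{D\cdot C\}$, realize it as $\gcd(A\cdot C, C^2)$ for an actual ample $A=D_0+nH$ by choosing $n$ large in a residue class supplied by CRT, and then bound $g_C=k\,g_{C_0}$ by splitting off the divisibility index $k$ of $C$ (controlled by $d(X)$ via the Zariski denominator of $A+mC$ -- the paper gets the same bound $k\le d(X)$ more simply from the decomposition of the primitive class $F=\frac1k C$ itself) and by showing $g_{C_0}\mid\Delta$ through expansion of the Gram determinant along the column of $C_0$. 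Your approach buys a constructive statement (the optimal $A$ is exhibited, and the exact gcd is identified), it sidesteps the prime-power bookkeeping and the slightly delicate adjugate/primitivity step of the paper's Lemma 2.4, and it essentially yields Theorem 2.3 in one stroke since $-C^2=(|C^2|/g_C)\cdot g_C\le d(X)\cdot d(X)!\cdot|\Delta|$ falls out immediately; the paper's approach buys the weaker existence requirement that no optimizing $A$ need be constructed. The only points to polish are cosmetic: work in $N^1(X)=NS(X)/\mathrm{tors}$ rather than $NS(X)$ so that a $\Z$-basis and the discriminant $\Delta$ make sense (torsion classes intersect trivially, so nothing changes), and note that the remark about ample classes generating $NS(X)$ is not actually needed for the image of $D\mapsto D\cdot C$ to equal $g_C\Z$.
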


   Granting the lemma, we give the proof of the theorem.

\begin{proof}[Proof of the theorem]
   Let $C$ be any irreducible negative curve on $X$. By Lemma
   \ref{lemma:bounded}, there exists an ample divisor $A$ such that
   $$
      \gcd(C^2,AC) \mid d(X)!\cdot\Delta
      \,.
   $$
   For a sufficiently large integer $k$,
   the line bundle $A + kC$ has Zariski decomposition
   $$
      A + k C = (A + \alpha C) + (k-\alpha)C
   $$
   where the divisor in parentheses is the positive part and
   $$
      \alpha=-\frac{A\cdot C}{C^2}
      \,.
   $$
   Now, the denominator of $\alpha$ is exactly
   $\tfrac{-C^2}{\gcd(C^2,AC)}$. In particular, by the assumed boundedness of the
   denominators,
   $$
      d(X) \ge \frac{-C^2}{\gcd(C^2,AC)} \ge \frac{-C^2}{d(X)!\cdot |\Delta|}
      \,.
   $$
   Therefore, the self-intersection of any curve $C$ is bounded
   from below by $-d(X)\cdot d(X)! \cdot |\Delta|$.
\end{proof}

   Now we turn to the proof of Lemma~\ref{lemma:bounded}. The first step is:

\begin{lemma}\label{lemma:divides-Delta}
   Let $X$ be a smooth projective surface with Zariski
   denominators bounded by $d(X)$. If $C$ is a negative curve and $t$
   an integer that divides $A\cdot C$ for all ample line bundles
   $A$ on $X$,
   then $t$ is a divisor of $d(X)!\cdot\Delta$.
\end{lemma}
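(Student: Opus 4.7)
The plan splits the problem into a lattice-theoretic bound and a Zariski-denominator bound.

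First, since the ample cone is open in $N^1(X)_{\R}$, every integral class is a difference of two ample classes (translate by a large multiple of any fixed ample), so the assumption $t\mid A\cdot C$ for all ample $A$ upgrades at once to $t\mid D\cdot C$ for every $D\in N^1(X)$. Equivalently, $C/t$ lies in the dual lattice $N^1(X)^*$ with respect to the intersection pairing. Since the finite group $N^1(X)^*/N^1(X)$ has order $|\Delta|$, multiplication by $|\Delta|$ annihilates it, and hence $|\Delta|\cdot C/t\in N^1(X)$. Writing $C=s\,C_0$ with $C_0\in N^1(X)$ a primitive class, this reads $t\mid s\,|\Delta|$, and the problem is reduced to showing $s\mid d(X)!$.

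For the latter I would apply the Zariski-denominator hypothesis to the decomposition of $A+kC$ for an ample $A$ and $k\gg 0$: the negative part is $(k-\alpha)C$ with $\alpha=-A\cdot C/C^2$, whose denominator in lowest terms is $|C^2|/\gcd(A\cdot C,C^2)\le d(X)$. Being a positive integer $\le d(X)$, this denominator is a divisor of $d(X)!$, so $|C^2|\mid d(X)!\cdot A\cdot C$ for every ample $A$. The same ample-difference trick used in the first step then propagates this to $|C^2|\mid d(X)!\cdot D\cdot C$ for every $D\in N^1(X)$. Specializing to $D=C_0$ gives $s^2\,|C_0^2|\mid d(X)!\cdot s\,|C_0^2|$, and cancellation produces $s\mid d(X)!$. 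Combining: $t\mid s\,|\Delta|\mid d(X)!\cdot|\Delta|$, as asserted.

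The delicate point, and the reason the factorial $d(X)!$ rather than just $d(X)$ appears in the final bound, is the $A$-dependence of the denominator $|C^2|/\gcd(A\cdot C,C^2)$: one cannot in general choose a single ample $A$ for which this quantity already captures the extremal case, so one must replace each denominator by a common multiple valid across all ample classes — and the cheapest such multiple of all integers $\le d(X)$ is $d(X)!$.
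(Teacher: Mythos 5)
Your argument is correct, and it diverges from the paper's in one substantive place. The first half --- upgrading $t \mid A\cdot C$ to $t\mid D\cdot C$ for all integral $D$ via the ample-difference trick, and then deducing $t \mid s\,|\Delta|$ --- is the paper's argument in different language: where you observe that $C/t$ lies in the dual lattice and is killed by $|\Delta|$ in the discriminant group, the paper multiplies the vector $c^{t}S$ by the adjugate matrix $S_{\rm adj}$ to obtain $\Delta\cdot c^{t}$; these are the same computation. The genuine difference is in how the index $s$ of $C$ over its primitive class $C_0$ is bounded. The paper simply notes that $C_0$ (called $F$ there) is itself a pseudo-effective integral class whose Zariski decomposition is $C_0=\tfrac1s C$, so the coefficient $\tfrac1s$ of its negative part has denominator $s\le d(X)$ --- a one-line application of the hypothesis. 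You instead extract $s\mid d(X)!$ from the decomposition of $A+kC$: the denominator $|C^2|/\gcd(A\cdot C,C^2)\le d(X)$ divides $d(X)!$, hence $|C^2|\mid d(X)!\,(A\cdot C)$ for all ample $A$, hence $|C^2|\mid d(X)!\,(D\cdot C)$ for all integral $D$, and $D=C_0$ gives $s^2|C_0^2|\mid d(X)!\,s\,|C_0^2|$, so $s\mid d(X)!$. This is longer but valid (the decomposition of $A+kC$ for $k\gg0$ is exactly the one the paper uses elsewhere, in the proof of Theorem~\ref{denominators-to-BNC}), and it spares you from having to invoke the pseudo-effectivity of the primitive class on the ray of $C$. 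Both routes arrive at $t\mid s\,|\Delta|$ with $s\mid d(X)!$, hence $t\mid d(X)!\cdot|\Delta|$.
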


\begin{proof}
   Let $F$ be the minimal integer divisor class in the ray in $N^1(X)_\R$
   spanned by $C$. Then $C=kF$ for some integer $k\ge1$, and $F$ is pseudo-effective
   with Zariski decomposition
   $$
      F = \frac1k \cdot C
      \,.
   $$
   By the boundedness assumption of Zariski denominators we have $k\le d(X)$.

   Note that if the hypothesis of the lemma is satisfied,
   then
   \begin{itemize}
   \item[(*)]
      $t$ divides $D\cdot C$ for all (integral) divisors $D$ on
      $X$.
   \end{itemize}
   In fact, if $D$ is any (integral) divisor, then
   $mA+D$ is ample for sufficiently large integers $m$, and hence
   $t$ divides both $mA\cdot C$ and $(mA+D)\cdot C$, so that it
   also divides $D\cdot C$.

   Choosing now a lattice basis of $N^1(X)$, we may think
   of the
   intersection form as given
   -- after modding out torsion --
   on $\Z^{\rho(X)}$ by an integral
   matrix $S$ of determinant $\Delta$, and of classes $C$, $D$ in
   $N^1(X)$ as
   represented by vectors $c$, $d$ in $\Z^{\rho(X)}$, with
   $$
      C\cdot D=c^t S d
      \,.
   $$
   In these terms, condition (*) implies that every entry in the
   vector $c^tS$ is divisible by $t$. Using now the adjugate
   matrix $S\adj$ of $S$, we infer that the vector
   $$
      c^tSS\adj=c^t\det(S)=c^t\Delta
   $$
   is divisible by $t$. Representing $F$ as an integral vector $f$,
   we obtain that $t$ is a divisor of
   $$
      f^t k \Delta
      \,.
   $$
   Now this implies that in fact $k \Delta$ is
   divisible by $t$, since otherwise every entry in the vector
   $f$ would be divisible by $t$, which in turn would mean
   that the class of $F$ is not primitive.
   Since $k\le d(X)$, in particular $t$ is a divisor
   of $d(X)!\cdot\Delta$.
\end{proof}

   We now prove Lemma~\ref{lemma:bounded} using
   Lemma~\ref{lemma:divides-Delta}.

\begin{proof}[Proof of Lemma~\ref{lemma:bounded}]
   Let $C$ be any negative curve on $X$.
   Assume by way of contradiction that the conclusion of the lemma
   is false.
   By the factorization theorem, then the following holds:
   \begin{itemize}
   \item[(+)]
      For every ample divisor $A$ on $X$ there exists a prime power $p^r$ such that
      $$
         p^r \mid C^2, \quad p^r \mid A\cdot C, \quad p^r \nmid d(X)!\cdot\Delta
         \,.
      $$
   \end{itemize}
   Note that there are only finitely many possibilities for prime
   powers satisfying (+), namely those $p^r$ that divide $C^2$ and do not divide $d(X)!\cdot\Delta$.
   Let $p_1,\dots p_s$ be the prime factors of $C^2$ such that there exists a
   power which divides $C^2$ but not $d(X)!\cdot\Delta$, and denote for each $i$ by
   $n_i$ the smallest number such that $p_i^{n_i}$ divides $C^2$ but not
   $d(X)!\cdot\Delta$.

   We claim:
   \begin{itemize}
   \item[(++)]
      There exists an $i\in\{1,\dots, s\}$ such
      that
      for all ample
      line bundles $A$, the intersection number
      $A\cdot C$ is divisible by $p_i^{n_i}$.
   \end{itemize}
   If (++) does not hold,
   then there is for every $i$ an ample
   divisor $A_i$ such that $p_i^{n_i}$ does not divide $A_i\cdot C$.
   Consider now the ample line bundle
   $$
      A:=p_2^{n_2}\dots p_r^{n_r} A_1+p_1^{n_1}p_3^{n_3}\dots p_r^{n_r} A_2
      +\dots+p_1^{n_1}\dots p_{r-1}^{n_{r-1}}A_r
      =\sum_{i=1}^r \frac{p_1^{n_1}\dots p_r^{n_r}}{p_i^{n_i}} A_i
      \,.
   $$
   By assumption (+), both $C^2$ and $A\cdot C$ are divisible by some $p_i^{r}$ that does not divide
   $d(X)!\cdot\Delta$. Therefore, by the minimality of the $n_i$ we have $r\ge n_i$, and $A\cdot C$ is
   also divisible by $p_i^{n_i}$. We can
   assume $i$ to be $1$. Now, $p_1^{n_1}$ divides all terms in the sum
   $$
      p_2^{n_2}\dots p_r^{n_r} A_1\cdot C+p_1^{n_1}p_3^{n_3}\dots p_r^{n_r} A_2\cdot C
      +\dots+p_1^{n_1}\dots p_{r-1}^{n_{r-1}}A_r\cdot C
   $$
   except for possibly the first one, and it divides the sum
   (which is $A\cdot C$).
   It must therefore also
   divide the first one, and hence it divides $A_1\cdot C$,
   which is a contradiction with the choice of $A_1$.

   We conclude that (++) holds.
   So the number $p_i^{n_i}$ that we have found divides $A\cdot C$
   for all ample line bundles $A$,
   thus by Lemma~\ref{lemma:divides-Delta} it divides $d(X)!\cdot\Delta$.
   Now, this is a contradiction with the choice of $n_i$, thus (+) is false
   and the lemma follows.
\end{proof}

%*****************************************************************************

\section{Examples}

\begin{example}[Surfaces with unbounded Zariski denominators in positive characteristic]\label{ex:positive-char}
   Let $C$ be a curve of genus $g\ge 2$ defined over a finite field of
   characteristic $p>0$.
   The surface $X=C\times C$ is then known to have unbounded
   negativity (see \cite[Sect.~2]{Duke}). Indeed,
   taking for $n\in\N$
   the graph $\Gamma_n$ of the Frobenius morphism
   obtained by taking $p^n$-th powers, we have
   $\Gamma_n^2=p^n(2-2g)\to-\infty$
   (see \cite[Ex.~V.1.10]{Hartshorne}).
   By
   Theorem~\ref{denominators-to-BNC}, $X$ must have
   unbounded Zariski
   denominators.
   In the particular case at hand,
   these are in fact quickly detected:
   Denote by $F_2$ a fiber of the second projection $X\to C$,
   and consider the divisor $D_n=F_2+\Gamma_n$.
   The negative part of its Zariski decomposition has support
   $\Gamma_n$ with coefficient
   $$
      \frac{D_n\cdot\Gamma}{\Gamma_n^2}=\frac{1+\Gamma_n^2}{\Gamma_n^2}
   $$
   Since numerator and denominator are coprime for all $n$, we see
   that the Zariski denominator is
   $-\Gamma_n^2=p^n(2g-2)$ and hence tends to infinity.

\end{example}

   Next, we determine concrete bounds on the Zariski denominators
   for classes of surfaces $X$ for which bounded negativity holds and explicit bounds $b(X)$ are known.

\begin{example}[Surfaces with nef anticanonical bundle]
   Let $X$ be a smooth projective surface with $-K_X$ nef.
   As a consequence of the adjunction formula, we have the negativity bound
   $b(X)=2$. Indeed, for every irreducible curve one has
   $2g(C) - 2 = K_{X}\cdot C + C^{2} \le C^{2}$,
   and hence $C^2\ge -2$.

   So, for every pseudo-effective integral divisor $D$ on $X$,
   the Zariski decomposition of $2^{\rho-1}!\cdot D$
   is integral.
\end{example}

\begin{example}[Surfaces with $d(X)=1$]
   Let $X$ be a smooth projective surface, such that all negative curves on
   $X$ are $(-1)$-curves. Then the Zariski decomposition
   of pseudo-effective integral divisors on $X$ is integral.
   Indeed, note that if $X$ contains only
   $(-1)$-curves, then every intersection matrix $S$ of negative
   curves, which are in the support of the negative part of the Zariski
   decomposition of a divisor $D$, has the form
   $-I_{k} = {\rm diag}({-1, \ldots, -1})$. (This follows from negative definiteness.)
   Equation~\eqnref{eqn:ai} shows then that the coefficient of a component $N_i$
   of the negative part of $D$ is the integer $-D\cdot N_i$.

   So we have $d(X)=1$ for instance
   on del Pezzo surfaces
   (see also \cite[Sect.~3]{BKS}),
   and conjecturally
   (according to a weaker form of the SHGH conjecture,
   cf.~\cite{DeFernex})
   for
   all blow-ups of $\P^2$ in several points in very general position.

\end{example}

\begin{example}[Surfaces with large $d(X)$]\label{example:large-d}
   By contrast, note that on the blow-up of $\P^2$ in three collinear points,
   fractional Zariski decompositions occur (see \cite[Example~2.3.20]{PAG}).
   This is in accordance with the fact that a $(-2)$-curve exists on that surface.
   More generally, let $X$ be the blow-up of $r$ points on a line $L$ in $\P^2$
   and denote
   the strict transform of $L$ by $\tilde L$, the pull-back of a general line by $H$
   and the exceptional divisors by $E_1,\dots,E_r$.
   Note first that the only negative curves on $X$ are $E_1,\dots,E_r$ and $\tilde L$.
   (This can be seen quickly by intersecting with $L$ and using a B\'ezout type argument.)
   So we know that
   $$
      b(X)=r-1
      \,.
   $$
   We claim that we have
   $$
      d(X)=b(X)
   $$
   in the case at hand.
   Consider to this end the Zariski decomposition of the divisor
   $\tilde L + H$. The coefficient of $\tilde L$ in its negative part
   is
   $$
      a=\frac{r-2}{r-1}
      \,.
   $$
   Therefore, $d(X)\ge r-1$. On the other hand, if $D$ is an effective
   divisor, then the intersection matrix of its negative part is either
   $-I_k={\rm diag}({-1, \ldots, -1})$ for $1\le k\le r$, or of the form
   $$
   		\left(\begin{array}{cccc}
   				&&&1\\
   				&-I_k&&\vdots\\
   				&&&1\\
   				1&\cdots&1&1-r\\
   		\end{array}\right)
   $$
   for $0\le k\le r-2$. In the latter case it is easy to see by an inductive argument that
   the determinant has absolute value $r-1-k$. Hence, by Equation~\eqnref{eqn:ai},
   we have $d(X)\le r-1$.
\end{example}

\begin{example}[Surfaces with $d(X)$ bigger than $b(X)$]\label{example:bigger}
   In order to see that the occurring denominators can in fact be larger than
   the least self-intersection dictates, consider the blow-up of $\P^2$ in
   $r = k_1+k_2$ points of which $k_1$  lie exclusively on a line $L_1$ and
   $k_2$ exclusively on a second line $L_2$. Assume further $k_1$ and $k_2$ to
   be coprime and both $\ge 4$.
   A computation shows that
   the divisor $H+ \tilde L_1 + \tilde L_2$ has negative part
   supported on $\tilde L_1$ and $\tilde L_2$ with coefficients
   $$
      a_i=\frac{k_1k_2-k_1-k_2-k_i}{k_1k_1-k_1-k_2}
      \,,
   $$
   respectively. In these expressions numerators and denominators are coprime, hence
   the Zariski denominator is $k_1k_2-k_1-k_2$. So we found that
   $$
      d(X)\ge k_1k_2-k_1-k_2
      \,,
   $$
   On the other hand, it is not hard to see that
   $$
      b(X)=\max(k_1-1,k_2-1)
      \,.
   $$
   So we have constructed a series of examples where
   $d(X)$ grows at least quadratically in $r$, whereas $b(X)$
   grows linearly in $r$.
\end{example}

   Note that in Example~\ref{example:bigger} we are still far from the
   theoretical upper bound
   $d(X) \le b(X)^{\rho(X) -1}$
   given by
   Theorem~\ref{BNC-to-denominators}, because $\rho(X)$ also
   grows linearly in $r$ in these cases.
   It would therefore be very interesting to know the answer to
   the following

\begin{question}
   Is there a sequence of surfaces where  $d(X)$ is not bounded by a polynomial in $b(X)$?
\end{question}

%*****************************************************************************

%***************************************************************************** % Addresses

\footnotesize
   \bigskip
   Thomas Bauer,
   Fachbereich Mathematik und Informatik,
   Philipps-Universit\"at Marburg,
   Hans-Meerwein-Stra\ss e,
   D-35032 Marburg, Germany.

   \nopagebreak
   \textit{E-mail address:} \texttt{tbauer@mathematik.uni-marburg.de}

   \medskip
   Piotr Pokora,
   Instytut Matematyki,
   Pedagogical University of Cracow,
   Podchor\c a\.zych 2,
   PL-30-084 Krak\'ow, Poland.

   Current Address:
   Fachbereich Mathematik und Informatik,
   Philipps-Universit\"at Marburg,
   Hans-Meerwein-Stra\ss e,
   D-35032 Marburg, Germany.

   \nopagebreak
   \textit{E-mail address:} \texttt{piotrpkr@gmail.com, piotrpokora@daad-alumni.de }

   \medskip
   David Schmitz,
   Fachbereich Mathematik und Informatik,
   Philipps-Universit\"at Marburg,
   Hans-Meerwein-Stra\ss e,
   D-35032 Marburg, Germany.

   \nopagebreak
   \textit{E-mail address:} \texttt{schmitzd@mathematik.uni-marburg.de}

%*****************************************************************************

\end{document}